\theoremstyle{plain}
\newtheorem*{slogan*}{Slogan}
\newcommand{\tmf}{tm\!f}
\newcommand{\mmf}{mm\!f}
\renewcommand{\ell}{{\textup{ell}}}
\newcommand{\bbC}{\mathbb{C}}
\newcommand{\bbG}{\mathbb{G}}
\newcommand{\bbP}{\mathbb{P}}
\newcommand{\mot}{{\textup{mot}}}
\newcommand{\cell}{{\textup{cell}}}
\newcommand{\spf}{{\textup{Spf}}}
\begin{document}

\title{$\bbE_\infty$ automorphisms of motivic Morava $E$-theories}
\date{\today}
\author{\textsc{Aaron Mazel-Gee}}

\begin{abstract}
We apply Goerss--Hopkins obstruction theory for motivic spectra to study the motivic Morava $E$-theories.  We find that they always admit $\bbE_\infty$ structures, but that these may admit ``exotic'' $\bbE_\infty$ automorphisms not coming from the usual Morava stabilizer group.
\end{abstract}

\maketitle




\setcounter{section}{-1}

\section{Introduction}

\subsection{Overview}

In this short note, we equip the motivic Morava $E$-theory spectra with canonical $\bbE_\infty$ structures, and compute their automorphisms as $\bbE_\infty$ ring spectra.  We find that these automorphism groups are (homotopically) discrete, but that they are apparently distinct from the usual Morava stabilizer group.  We refer the reader to \Cref{thm motivic morava E-theories} for a precise statement of our main result, and to \Cref{rem compare mot morava Ethy result with aut of FGLs} for a discussion of these automorphism groups.  In \Cref{remark compare with NSO on KGL}, we explain the precise relationship between our work and that of Naumann--Spitzweck--{\O}stv{\ae}r \cite{NSOAKT} on motivic algebraic K-theory (i.e.\! in the height-1 case).

Our proof is patterned directly on that of Goerss--Hopkins \cite{GH-moduli-spaces,GH} for the ordinary (i.e.\! non-motivic) Morava $E$-theory spectra (which is based on much prior work, notably that of Hopkins--Miller \cite{RezkHopMil}).  Whereas their proof is based in Goerss--Hopkins obstruction theory for ordinary spectra, our proof uses our generalization \cite{GHOsT} of Goerss--Hopkins obstruction theory to an arbitrary presentably symmetric monoidal stable $\infty$-category.

The most immediate consequence of the present work is that it endows the motivic cohomology theories represented by the motivic Morava $E$-theories with the rich algebraic structure of power operations.  However, we also view it as a first step towards a moduli-theoretic construction of a motivic spectrum $\mmf$ of \textit{motivic modular forms}, in analogy with the ordinary spectrum $\tmf$ of topological modular forms \cite{tmfbook}.\footnote{The works \cite{Ricka-mmf,GIKR-mmf} take a different approach, producing motivic spectra over $\bbR$ and $\bbC$ whose cohomologies coincide with that expected of $\mmf$ (in analogy with $\tmf$).  These constructions are indirect, and relatively specific to the chosen base fields; in particular, the resulting motivic spectra are not manifestly related to any theory of elliptic motivic spectra.}  As the construction of $\tmf$ has been highly influential in chromatic homotopy theory, so would the construction of $\mmf$ significantly advance the chromatic approach to motivic homotopy theory, which is a highly active area of research \cite{Voe-ICM,HuKriz-MGL,Vezz-BP,Borgh-K,Horn-chrommot,LevMor-MGL,PPR-univ,NSO,NSO-nonregular,BalmerSpectra,Isaksen-wantmmf,Isaksen-htpyofCmmf,Andrews-families,Hoyois-MGL,Horn-nilp,Joachimi-thick,HO-PrimesMot,Gheorge-exotic}.

There has been much recent interest in ``genuine'' operadic structures, e.g.\! genuine $G$-spectra with multiplications indexed by maps of finite $G$-sets (instead of just finite sets) \cite{BH-Ninfty,HH-Eqmult,BH-Gsm,BH-Tamb,Rubin-Ninfty,BP-eqop,GW-eqop}, as well as analogous structures in motivic homotopy theory \cite{BachHoy}.  We do not contend with such structures here.  However, we are optimistic that our generalization of Goerss--Hopkins obstruction theory admits a fairly direct enhancement to one that would handle them in a formally analogous way.  Thereafter, it seems quite plausible that the present work would admit a straightforward extension to give ``motivically genuine'' $\bbE_\infty$ structures on the motivic Morava $E$-theory spectra.

\subsection{Conventions}

\begin{itemize}

\item We write $\Sp^\mot$ for the (presentably symmetric monoidal stable) $\infty$-category of motivic spectra.\footnote{We implicitly work over a regular noetherian base scheme of finite Krull dimension, but this is only in order to employ the results of \cite{NSO}.  We will additionally use a result of \cite{GepSnaithSplitting}, which requires a (not necessarily regular) noetherian base scheme of finite Krull dimension.}  This comes equipped with a distinguished group of invertible objects
\[ \G = \{ S^{i,j} \}_{i,j \in \bbZ} \cong \bbZ \times \bbZ , \]
the \textit{motivic sphere spectra}: the unit object is $\unit = S^{0,0}$, its categorical suspension is $\Sigma \unit = S^{1,0}$, and then by definition we have $\Sigma^\infty \bbG_m = S^{1,1}$.  In particular, it follows that $S^{2,1} = \Sigma^\infty \bbP^1$.

\item For any $X \in \Sp^\mot$, we write $X_{**} = \pi_{**} X$ for its bigraded homotopy groups, i.e.\! $X_{i,j} = \pi_{i,j} X = [S^{i,j},X]_{\Sp^\mot}$.  Additionally, we write $X_* = \pi_* X$ for its $(2,1)$-line of homotopy groups, i.e.\! $X_i = \pi_i X = [S^{2i,i} , X]_{\Sp^\mot}$.

\item We write $\Sp^\mot_\cell \subset \Sp^\mot$ for the coreflective subcategory of \textit{cellular} motivic spectra.  This is the subcategory generated under colimits by the motivic sphere spectra.  It can also be characterized as the subcategory of colocal objects for the ``bigraded homotopy groups'' functor; in particular, within this subcategory, bigraded homotopy groups detect equivalences.

\item We fix a finite field $k$ of characteristic $p > 0$, and we fix a formal group law $\bbG_0$ over $k$ of finite height $n$.

\item We write $E(k,\bbG_0)$ for the corresponding Lubin--Tate deformation ring, we write $\mf{m} \subset E(k,\bbG_0)$ for its unique maximal ideal, and we fix an isomorphism $E(k,\bbG_0)/\mf{m} \cong k$.

\item We fix a versal deformation $\bbG$ of $\bbG_0$ over $E(k,\bbG_0)$.  To be precise, $\bbG$ is a formal group law over $E(k,\bbG_0)$, and pushes forward to $\bbG_0$ along the now-canonical map $E(k,\bbG_0) \ra k$.  Geometrically, this corresponds to a pullback
\[ \begin{tikzcd}
\bbG_0 \arrow{r} \arrow{d} & \bbG \arrow{d} \\
\spec(k) \arrow{r} & \spf(E(k,\bbG_0))
\end{tikzcd} \]
of formal groups (where we notationally identify formal group laws with their underlying formal groups).

\item We write \[ E^\top = E^\top_{k,\bbG_0} \in \Sp \] for the (ordinary) Morava $E$-theory spectrum corresponding to the pair $(k,\bbG_0)$, coming from the Landweber exact functor theorem (see e.g.\! \cite[Theorem 6.4 and 6.9]{RezkHopMil}) applied to the formal group law $\bbG$ over $E(k,\bbG_0)$.\footnote{This is known to be $\bbE_\infty$, by \cite[Corollary 7.6]{GH-moduli-spaces} (which is precisely the result we generalize here).}  To be precise, we have a chosen isomorphism \[ E_*^\top \cong E(k,\bbG_0)[u^\pm] \] (with $|u|=2$), and the degree-$(-2)$ formal group law $\ol{\bbG}$ on $E_*^\top$ coming from its complex orientation corresponds to $\bbG$ via the unit $u \in E_2^\top$, considered as a degree-$0$ formal group law on $E_*^\top$.

\item We write \[ E = E^\mot = E^\mot_{k,\bbG_0} \in \Sp^\mot_\cell \] for the motivic Morava $E$-theory spectrum corresponding to the pair $(k,\bbG_0)$ coming from the motivic Landweber exact functor theorem of \cite[Theorem 8.7]{NSO}.  This is cellular by construction, and comes equipped with a quasi-multiplication (i.e.\! a multiplication up to phantom maps).  Moreover, writing $MGL \in \Sp^\mot$ for the algebraic bordism spectrum and $MU \in \Sp$ for the complex bordism spectrum, we have isomorphisms \[ E_{**} \cong MGL_{**} \otimes_{MU_*} E_*^\top \] and \[ E_{**} E \cong E_{**} \otimes_{E_*^\top} E_*^\top E^\top, \] and the structure maps of the Hopf algebroid $(E_{**},E_{**}E)$ are tensored up from those of $(E_*,E_*E)$.

\end{itemize}

\subsection{Acknowledgments}

David Gepner was instrumental in deducing this application of $\infty$-categorical Goerss--Hopkins obstruction theory, and it is a pleasure to acknowledge his help.  We would also like to acknowledge Markus Spitzweck for his helpful correspondence, as well as the NSF graduate research fellowship program (grant DGE-1106400) for financial support during the time that this research was carried out.

\section{$\bbE_\infty$ automorphisms of motivic Morava $E$-theories}

We now state the main result.

\begin{thm}\label{thm motivic morava E-theories}
The motivic Morava $E$-theory spectrum $E = E^\mot_{k,\bbG_0}$ has a unique $\bbE_\infty$ structure refining the ring structure on its bigraded homotopy groups, and as such generates a subgroupoid of $\CAlg(\Sp^\mot)$ equivalent to
\[ B(\Aut_{\CAlg(\Comod_{(E_{**},E_{**}E)})}(E_{**}E)) . \]
In particular, its space of automorphisms is discrete.
\end{thm}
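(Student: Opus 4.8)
The plan is to run the machinery of Goerss--Hopkins obstruction theory, in the generality of \cite{GHOsT} applied to the presentably symmetric monoidal stable $\infty$-category $\Sp^\mot_\cell$, exactly as Goerss--Hopkins do in the topological case \cite{GH-moduli-spaces,GH}. The key input is that $(E_{**}, E_{**}E)$ is a nice enough Hopf algebroid that the relevant obstruction groups all vanish. Concretely, the obstructions to existence and uniqueness of an $\bbE_\infty$ structure, and to computing the homotopy type of the moduli space of such structures, live in the André--Quillen cohomology groups $H^s_{\mathsf{AQ}}(E_{**}/ \unit_{**}; \Omega^t E_{**})$ of the $\bbE_\infty$-$E_{**}$-algebra in $(E_{**}, E_{**}E)$-comodules, for $s \geq 2$ (existence/uniqueness), and the moduli space is then assembled from a Bousfield--Kan-type tower whose homotopy fiber data is controlled by these same groups together with $\pi_0$ of the moduli of the algebra structure on $E_{**}E$ in comodules. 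So the first step is to set up this obstruction-theoretic apparatus in $\Sp^\mot_\cell$, identifying the $E$-homology of the relevant free objects and the correct notion of ``$E_{**}E$-comodule'' (which by the conventions recalled above is just the base change along $E_*^\top \to E_{**}$ of the topological situation).

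The second step, and the crux of the argument, is the vanishing of these André--Quillen cohomology groups. Here I would exploit the tensor-up formulas $E_{**} \cong MGL_{**} \otimes_{MU_*} E_*^\top$ and $E_{**}E \cong E_{**} \otimes_{E_*^\top} E_*^\top E^\top$, together with flatness of $MGL_{**}$ over $MU_*$ (or rather over $E_*^\top$ after base change): base change of a Hopf algebroid along a flat map induces an equivalence on the relevant comodule categories compatible with the cotangent complex, so the motivic André--Quillen cohomology groups are computed by base change from the topological ones. Since Goerss--Hopkins (building on Hopkins--Miller \cite{RezkHopMil}) established that $H^s_{\mathsf{AQ}}(E_*^\top / S_*; \Omega^t E_*^\top) = 0$ for $s \geq 2$ --- the Lubin--Tate ring being ``smooth'' in the appropriate derived sense, essentially because $E_*^\top E^\top$ is the ring of functions on a pro-étale groupoid (the Morava stabilizer group(oid)) acting on $\spf E_*^\top$ --- the same vanishing holds motivically. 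This forces both existence and uniqueness of the $\bbE_\infty$ refinement, and collapses the moduli tower so that the moduli space is equivalent to $\pi_0$ of the moduli of algebra structures on $E_{**}E$ in $(E_{**},E_{**}E)$-comodules, i.e.\ to $B(\Aut_{\CAlg(\Comod_{(E_{**},E_{**}E)})}(E_{**}E))$ --- hence the space of automorphisms is discrete.

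I expect the main obstacle to be purely bookkeeping-theoretic rather than conceptual: one must check that every hypothesis needed to invoke \cite{GHOsT} is met in $\Sp^\mot_\cell$ --- in particular that $E$ is suitably ``Adams-type'' (that $E_{**}E$ is flat over $E_{**}$ and that $E$ is built appropriately from finite cellular objects so that the relevant completeness/convergence statements hold), and that the Hopf algebroid $(E_{**}, E_{**}E)$ satisfies whatever algebraic flatness and size conditions make the André--Quillen obstruction theory go through. This is exactly where cellularity is essential (bigraded homotopy detects equivalences, so the $E$-based Adams tower converges) and where the results of \cite{NSO} on the motivic Landweber exact functor theorem are invoked. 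A secondary point of care is to make sure the identification of the motivic cotangent complex with the base change of the topological one is done at the level of $\bbE_\infty$-algebras in comodules and not merely on underlying modules; but given the clean tensor-up description of the entire Hopf algebroid structure, this should be a formal consequence of flatness. The discreteness of the automorphism space then follows immediately, since a moduli space of the form $B(\Aut)$ of a $1$-categorical automorphism group has no higher homotopy.
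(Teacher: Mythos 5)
Your proposal captures the correct overall strategy --- apply the motivic Goerss--Hopkins obstruction theory of \cite{GHOsT} in $\Sp^\mot_\cell$ exactly as Goerss--Hopkins do topologically, using \cite[Theorems 8.5, 8.8, 8.9]{GHOsT} as the analogues of \cite[Proposition 5.2, Proposition 5.5, Theorem 5.8]{GH-moduli-spaces} --- and your conclusion lands in the right place. But you omit two technical points that the paper handles explicitly and that genuinely need to be addressed, and your account of the vanishing of the obstruction groups takes a somewhat different route from the paper's.

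First omission: the motivic Landweber exact functor theorem only equips $E$ with a quasi-multiplication (a multiplication up to phantom maps), not a genuine homotopy ring structure. The paper observes that this suffices to give $E_{**}$ and $E_{**}E$ the required algebraic structure, since bigraded homotopy groups --- being maps out of bigraded spheres --- cannot detect phantom maps. This is a prerequisite to even setting up the obstruction problem.

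Second omission: the obstruction machinery produces a moduli space inside $\leftloc_{E_{**}}(\CAlg(\Sp^\mot_\cell))$, i.e.\ localization with respect to $E_{**}$-isomorphisms, \emph{not} $E \otimes (-)$-equivalences. These two localizations do not a priori agree on all of $\Sp^\mot$, because an $E_{**}$-acyclic object may fail to be $E$-acyclic. The paper's \Cref{two types of localization} shows they coincide on cellular motivic spectra, and then invokes the standard argument that $E$ is $E$-local (hence $E_{**}$-local in $\Sp^\mot_\cell$) to conclude that the realization the obstruction theory produces actually has underlying motivic spectrum $E$. You invoke cellularity for Adams-tower convergence, but the more essential role of cellularity in this proof is precisely this reconciliation of the two localizations.

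On the André--Quillen vanishing: your proposal to base-change the cotangent complex along $E_*^\top \to E_{**}$ is plausible in spirit, but it is not the paper's argument, and it carries nontrivial burdens --- you'd need to verify compatibility of cotangent complexes with base change at the level of $\bbE_\infty$-algebras in comodules, and you'd need flatness statements (e.g.\ flatness of $MGL_{**}$ over $MU_*$, or of $E_{**}$ over $E_*^\top$) that are not immediate. The paper's route is more direct: it observes that the computations in \cite[\sec 6]{GH-moduli-spaces} are purely algebraic, that the gradings appearing there come from the external simplicial direction (the internal gradings play no real role), and that the ``Dyer--Lashof operations'' arise from the algebraic theory of \cite{MaySteenrod} rather than from any operations in motivic homology --- so everything carries over verbatim when ordinary gradings are replaced with bigradings. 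This sidesteps any need for base-change arguments on cotangent complexes.

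Finally, you correctly stop short of identifying the automorphism group with automorphisms of a formal group law; the paper explicitly declines to carry over that last step of \cite[Corollary 7.6]{GH-moduli-spaces}, deferring instead to the discussion in \Cref{rem compare mot morava Ethy result with aut of FGLs}.
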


\begin{lem}
Any Landweber exact motivic spectrum satisfies Adams's condition.
\end{lem}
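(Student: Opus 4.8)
The plan is to mimic, in $\Sp^\mot_\cell$, the classical argument of Goerss--Hopkins (following Adams) that Landweber exact ring spectra satisfy Adams's condition, reducing everything to the case of $MGL$. Recall that a cellular ring object $E$ satisfies \emph{Adams's condition} when it can be written as a filtered colimit $E \simeq \operatorname{colim}_\alpha E_\alpha$ of dualizable --- equivalently, finite cellular --- objects with each $E_{**}E_\alpha$ a finitely generated projective $E_{**}$-module and the resulting pro-system $\{E^{**}E_\alpha\}_\alpha$ of bigraded cohomology groups Mittag--Leffler; this is exactly what makes $(E_{**},E_{**}E)$ a flat Hopf algebroid over which $E_{**}(-)$ takes values in comodules, which is the input needed to run the machinery of \cite{GHOsT}.

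The first step is $MGL$ itself. Write $MGL \simeq \operatorname{colim}_{m,N} MGL_{m,N}$, where $MGL_{m,N}$ is the Thom spectrum of the tautological rank-$m$ bundle over the finite Grassmannian $\operatorname{Gr}(m,N)$: these are finite cellular motivic spectra (Schubert cells), $MGL_{**}MGL_{m,N}$ is free over $MGL_{**}$ by the Thom isomorphism and the freeness of $MGL_{**}\operatorname{Gr}(m,N)$, and the transition maps are split inclusions of free summands, so the Mittag--Leffler clause holds automatically. (This is essentially the computation behind the fact, established in \cite{NSO}, that $(MGL_{**},MGL_{**}MGL)$ is a flat Hopf algebroid with $MGL_{**}MGL$ free over $MGL_{**}$.) Now let $E$ be Landweber exact, with defining $MU_*$-algebra $R_* = E^\top_*$, so $E_{**}(-) \cong MGL_{**}(-)\otimes_{MU_*}R_*$ on cellular motivic spectra and $E$ is itself cellular. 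The key mechanism is that finite spectra with projective classical $E^\top$-homology realize to finite cellular motivic spectra with projective $E_{**}$-homology: the constant symmetric monoidal functor $c\colon \Sp \to \Sp^\mot_\cell$ satisfies $MGL_{**}c(F) \cong MGL_{**}\otimes_{MU_*}MU_*F$ for $F \in \Sp$ finite, whence $E_{**}c(F) \cong E_{**}\otimes_{E^\top_*}E^\top_*F$ by Landweber exactness of $E^\top$, and this is finitely generated projective over $E_{**}$ as soon as $E^\top_*F$ is finitely generated projective over $E^\top_*$. Since $E^\top$ satisfies Adams's condition \cite{GH-moduli-spaces}, we fix a presentation $E^\top \simeq \operatorname{colim}_\alpha E^\top_\alpha$ by finite spectra with $E^\top_*E^\top_\alpha$ finitely generated projective and $\{(E^\top)^*E^\top_\alpha\}$ Mittag--Leffler; the task is then to assemble, out of the $MGL_{m,N}$ and the $c(E^\top_\alpha)$, a filtered diagram of finite cellular motivic spectra with colimit $E$ --- that is, to reproduce over $MGL$ the way $E$ is built from $MGL$ by the localizations, regular quotients, and completions recorded in $MU_* \to R_*$. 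The identification $(E_{**},E_{**}E) \cong (E_{**},\, E_{**}\otimes_{E^\top_*}E^\top_*E^\top)$, with structure maps tensored up from $(E^\top_*,E^\top_*E^\top)$, is what lets the classical bookkeeping carry over.

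The hard part will be this last assembly, and within it the Mittag--Leffler property of the resulting pro-system of bigraded cohomology groups. The difficulty is that $R_* = E^\top_*$ is a completed local ring, so $E_{**}E$ is only ``pro-free'' over $E_{**}$, $E_{**}$ need not be flat over $MGL_{**}$, and the realized finite spectra cannot be controlled by any naive flatness argument over $MGL_{**}$; the needed $\varprojlim^1$-vanishing has to be inherited from the corresponding classical statement for $E^\top$. As in \cite{GH-moduli-spaces}, nothing in the argument is special to Morava $E$-theory, so it applies to an arbitrary Landweber exact motivic spectrum, which is the stated generality.
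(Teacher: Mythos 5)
Your handling of $MGL$ itself is essentially the paper's: present $MGL$ as a filtered colimit of Thom spectra over finite Grassmannians, note these are dualizable, and deduce that $MGL_{**}$ of their duals is free by a Thom-isomorphism/Grassmannian-cohomology computation. (The paper is more careful about the bigrading: it first establishes freeness in bidegree $(0,0)$ via the algebra presentation of Gepner--Snaith and then passes to arbitrary $(i,j)$ by the K\"unneth isomorphism $MGL_{i,j}(\Dual MGL_\alpha) \cong MGL_{0,0}(S^{-i,-j}) \otimes_{MGL_{0,0}} MGL_{0,0}(\Dual MGL_\alpha)$, a point your sketch doesn't address; but this is a refinement of the same idea.)

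The genuine gap is in the reduction from $MGL$ to a general Landweber exact $E$. You try to manufacture a new filtered diagram of finite cellular motivic spectra with colimit $E$ out of the motivic pieces $MGL_{m,N}$ and lifts $c(E^\top_\alpha)$ of a classical Adams presentation of $E^\top$, and you explicitly leave ``this last assembly'' undone, flagging the Mittag--Leffler clause as the sticking point. This assembly is not just undone; it is not how the argument goes and is likely intractable as posed, since $E$ is obtained from $MGL$ by non-finite constructions that a cell-by-cell lift will not track. The paper's reduction, following Rezk's Proposition~15.3, is direct: the finite dualizable objects used to verify Adams's condition for $E$ are the \emph{same} spectra $\Dual MGL_\alpha$ as for $MGL$, and Landweber exactness gives $E_{**}(\Dual MGL_\alpha) \cong MGL_{**}(\Dual MGL_\alpha) \otimes_{MGL_{**}} E_{**}$, which is projective over $E_{**}$ because $MGL_{**}(\Dual MGL_\alpha)$ is free over $MGL_{**}$. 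No new filtration of $E$ is built. There is also a concrete error in the mechanism you propose: the formula $MGL_{**}c(F) \cong MGL_{**} \otimes_{MU_*} MU_*F$ is false for the constant symmetric monoidal functor $c\colon \Sp \to \Sp^\mot$, which sends $S^{2k}$ to $S^{2k,0}$ rather than to $S^{2k,k}$; for a finite even $F$ the left side is a sum of shifts $MGL_{*-2k,*}$ while the right side involves $MGL_{*-2k,*-k}$. A functor sending $S^2 \mapsto S^{2,1} = \Sigma^\infty \bbP^1$ is what you would need, and that is not the constant embedding and is not symmetric monoidal without further input.
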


\begin{proof}
The proof is almost identical to that of \cite[Proposition 15.3]{RezkHopMil}.  First of all, the general statement follows from the universal case of $MGL$.  In turn, we can present $MGL$ as a filtered colimit of Thom spectra over finite Grassmannians, which are then dualizable.  Let us write this as $MGL \simeq \colim_\alpha MGL_\alpha$.\footnote{Explicitly, $\Dual (MGL_\alpha)$ is also a Thom spectrum via the formula $\Dual (X^\xi) \simeq X^{-\xi}$.}  So, it only remains to verify that $MGL_{**}(\Dual (MGL_\alpha))$ is projective as an $MGL_{**}$-module.  In bidegree $(0,0)$, we observe that $MGL_{**}(\Dual(MGL_\alpha)) \cong (MGL^{**}MGL_\alpha)^\vee$, so that here the claim follows from the algebra presentation of \cite[Proposition 2.19]{GepSnaithSplitting}, which in particular implies (by inducting on the dimension of the Grassmannians) that this algebra itself is actually free as an $MGL_{**}$-module.  From here, in an arbitrary bidegree $(i,j)$ we then compute that
\begin{align*}
MGL_{i,j}(\Dual (MGL_\alpha)) & \cong MGL_{0,0}(S^{-i,-j} \otimes \Dual(MGL_\alpha)) \\
& \cong MGL_{0,0}(S^{-i,-j}) \otimes_{MGL_{0,0}} MGL_{0,0}(\Dual(MGL_\alpha))
\end{align*}
(using the K\"unneth theorem).
\end{proof}

\begin{obs}\label{two types of localization}
By definition, $E_{**}$-localization in $\Sp^\mot$ is the localization determined by the $E_{**}$-acyclics, i.e.\! those objects $Z$ such that $E_{**} Z \cong 0$.  Note that such motivic spectra $Z$ may not be $E$-acyclic, i.e.\! it might still be the case that $E \otimes Z \not\simeq 0$.  On the other hand, if $Z$ is also cellular, since $E$ is cellular then so is $E \otimes Z$ (since $\Sp^\mot_\cell$ is a colocalization of $\Sp^\mot$ and the symmetric monoidal structure commutes with colimits in each variable).  Thus, when restricted to cellular motivic spectra, the localizations $\leftloc_E$ and $\leftloc_{E_{**}}$ agree.  This is summarized by the diagram
\[ \begin{tikzcd}
L_{E_{**}}(\CAlg(\Sp^\mot_\cell)) \arrow{r} \arrow{d}[sloped, anchor=north]{\sim} & L_{E_{**}}(\CAlg(\Sp^\mot)) \arrow{d} \\
L_E(\CAlg(\Sp^\mot_\cell)) \arrow{r} \arrow{d} & L_E(\CAlg(\Sp^\mot)) \arrow{d} \\
\CAlg(\Sp^\mot_\cell) \arrow{r} & \CAlg(\Sp^\mot)
\end{tikzcd} \]
of $\infty$-categories.
\end{obs}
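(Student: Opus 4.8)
The plan is to reduce the whole statement to the elementary fact that a \emph{cellular} motivic spectrum is $E_{**}$-acyclic precisely when it is $E$-acyclic, and then to read the diagram off by formal manipulation of fully faithful inclusions.

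\emph{Step 1 (acyclics agree on $\Sp^\mot_\cell$).} Let $Z \in \Sp^\mot_\cell$. If $E \otimes Z \simeq 0$ then trivially $E_{**} Z = \pi_{**}(E \otimes Z) \cong 0$. Conversely, suppose $E_{**}Z \cong 0$. The subcategory $\Sp^\mot_\cell \subset \Sp^\mot$ is closed under $\otimes$ (it is generated under colimits by the sphere spectra, which are closed under $\otimes$, and $\otimes$ preserves colimits separately in each variable), and $E$ is cellular, so $E \otimes Z$ is cellular; since $\pi_{**}(E \otimes Z) = E_{**}Z \cong 0$ and bigraded homotopy groups detect equivalences among cellular motivic spectra, we get $E \otimes Z \simeq 0$. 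Hence the $E$-acyclics and the $E_{**}$-acyclics in $\Sp^\mot_\cell$ form the same class, so $\leftloc_E$ and $\leftloc_{E_{**}}$ determine one and the same localization of $\Sp^\mot_\cell$. (On $\Sp^\mot$ one has only the inclusion $\{E\text{-acyclics}\} \subseteq \{E_{**}\text{-acyclics}\}$, obtained by applying $\pi_{**}$, and correspondingly $\{L_{E_{**}}\text{-locals}\} \subseteq \{L_E\text{-locals}\}$; the reverse inclusions fail, but no counterexample is needed here.)

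\emph{Step 2 (passage to commutative algebras).} Each relevant class of acyclics is a $\otimes$-ideal — obviously for the $E$-acyclics in $\Sp^\mot$, and equally for those in $\Sp^\mot_\cell$ since that subcategory is closed under $\otimes$ — so the associated Bousfield localizations are compatible with the symmetric monoidal structures and descend to accessible localizations of $\CAlg(\Sp^\mot)$ and of $\CAlg(\Sp^\mot_\cell)$, with the standard features that the local objects are exactly the commutative algebras with local underlying motivic spectrum and that localization is computed underlying. Combining this with Step 1: $L_{E_{**}}(\CAlg(\Sp^\mot_\cell))$ and $L_E(\CAlg(\Sp^\mot_\cell))$ are literally the same full subcategory of $\CAlg(\Sp^\mot_\cell)$, while $L_{E_{**}}(\CAlg(\Sp^\mot))$ is a full subcategory of $L_E(\CAlg(\Sp^\mot))$; thus all four vertical functors in the diagram above are fully faithful inclusions, the upper-left one being moreover an equivalence.

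\emph{Step 3 (commutativity).} The three horizontal functors are induced by the fully faithful, symmetric monoidal inclusion $\Sp^\mot_\cell \hookrightarrow \Sp^\mot$ applied levelwise to commutative algebras, so once one knows this inclusion carries $E$-local (resp.\ $E_{**}$-local) cellular commutative algebras to $E$-local (resp.\ $E_{**}$-local) commutative algebras, commutativity is immediate. That last claim is the only point requiring an argument. Using the colocalization adjunction to rewrite $\Map_{\Sp^\mot}(Z, R) \simeq \Map_{\Sp^\mot_\cell}(Z^\cell, R)$ for an arbitrary acyclic $Z$, it reduces to showing that the cellularization functor $(-)^\cell$ preserves $E$-acyclicity and $E_{**}$-acyclicity, which in turn follows from the lemma that $(-) \otimes E$ sends $\pi_{**}$-equivalences to $\pi_{**}$-equivalences. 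For the lemma, write $E$ as a filtered colimit of finite cellular motivic spectra; since $\otimes$ and $\pi_{**}$ both commute with filtered colimits (the sphere spectra being compact), it suffices to treat finite cellular spectra, for which an induction on cells — using the long exact sequences (the five lemma) at each stage — reduces to the sphere spectra $S^{i,j}$, where $(-) \otimes S^{i,j}$ is an autoequivalence. Applying the lemma to the $\pi_{**}$-equivalence $Z^\cell \to Z$ gives $E_{**}Z^\cell \cong E_{**}Z$, and when $Z$ is $E$-acyclic the motivic spectrum $E \otimes Z^\cell$ is cellular with vanishing bigraded homotopy, hence zero. I expect this lemma to be the only genuinely non-formal ingredient; everything else is the definitions of the two localizations plus bookkeeping with fully faithful inclusions.
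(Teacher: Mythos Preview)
Your argument is correct, and for the essential content---that the two localizations agree on cellular spectra---it is identical to the paper's: both hinge on $\Sp^\mot_\cell$ being closed under $\otimes$ together with the fact that $\pi_{**}$ detects equivalences there. Your Step~3 actually goes beyond what the paper offers, since the paper merely presents the diagram as a summary without verifying that the horizontal arrows land where claimed; your lemma that $(-)\otimes E$ preserves $\pi_{**}$-equivalences (via the filtered-colimit-of-finite-cells presentation of $E$) is a correct and worthwhile addition.
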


\begin{proof}[Proof of \Cref{thm motivic morava E-theories}]
The proof is formally identical to that of \cite[Corollary 7.6]{GH-moduli-spaces}, only we work in the $\infty$-category $\Sp^\mot_\cell$: the key pieces of input are \cite[Theorems 8.5, 8.8, and 8.9]{GHOsT}, which are respectively generalizations of \cite[Proposition 5.2, Proposition 5.5, and Theorem 5.8]{GH-moduli-spaces}.  The passage from the ordinary case to the motivic case runs as follows.

First of all, a priori we only have a quasi-multiplication on $E \in \Sp^\mot_\cell$.  However, this suffices to give all the required structure on its bigraded $E$-homology groups: these are by definition homotopy classes of maps out of bigraded spheres, which by definition cannot detect phantom maps.

Next, a priori, Goerss--Hopkins obstruction theory in $\Sp^\mot_\cell$ using the homology theory $E_{**}$ computes a moduli space in $\leftloc_{E_{**}}(\CAlg(\Sp^\mot_\cell))$.  However, as explained in \Cref{two types of localization}, we have an equivalence
\[ \leftloc_{E_{**}}(\CAlg(\Sp^\mot_\cell)) \simeq \leftloc_E(\CAlg(\Sp^\mot_\cell)) , \]
and the usual proof that $E$ is $E$-local then applies (see e.g.\! \cite[Proposition 1.17]{RavLoc}).  Thus we have $E \in \leftloc_{E_{**}}\Sp^\mot_\cell$, and hence the moduli space that we construct inside of $\CAlg(\leftloc_{E_{**}}(\Sp^\mot_\cell)) \simeq \leftloc_{E_{**}}(\CAlg(\Sp^\mot_\cell))$ is that of an object whose underlying motivic spectrum is indeed $E$ itself.

Now, let us turn to the remainder of the proof of \cite[Corollary 7.6]{GH-moduli-spaces} and its ingredients.  We do \textit{not} carry over the last line (which identifies the relevant automorphism group with an automorphism group in a category of formal group laws).\footnote{However, see \Cref{rem compare mot morava Ethy result with aut of FGLs}.}  However, everything else used there is entirely algebraic, and works equally well replacing ordinary gradings with bigradings.  Note that the gradings appearing in \cite[\sec 6]{GH-moduli-spaces} arise from the external simplicial direction (and the internal gradings play no real role); note too that the ``Dyer--Lashof operations'' arising there arise from the algebraic theory given in \cite{MaySteenrod} (and in particular have nothing whatsoever to do with operations in motivic homology).
\end{proof}

\begin{rem}\label{rem compare mot morava Ethy result with aut of FGLs}
Using various adjunctions as well as the fact that all morphisms respect bigradings, one can identify the endomorphism monoid
\[ \End_{\CAlg(\Comod_{(E_{**},E_{**}E)})}(E_{**}E) \]
(the classifying space of whose maximal subgroup appears in the statement of \Cref{thm motivic morava E-theories}) with the hom-set
\[ \hom_{\CAlg(\Mod_{E_*^\top})}(E_*^\top E^\top , MGL_* \otimes_{MU_*} E_*^\top) . \]
This appears to fall under the auspices of \cite[\sec 17]{RezkHopMil}, and thus ought to have a moduli-theoretic interpretation.

A reasonable guess would be that, if we define the map $\chi$ via the pullback diagram
\[ \begin{tikzcd}
\spec(E_*^\mot) \arrow{r}{\chi} \arrow{d} & \spec(E_*^\top) \arrow{d} \\
\spec(MGL_*) \arrow{r} & \spec(MU_*) ,
\end{tikzcd} \]
then the group in question should be the group of (strict) automorphisms of the formal group law $\chi^* \bbG$ over \[ E_*^\mot = MGL_* \otimes_{MU_*} E_*^\top.\]  However, we have not managed to verify this claim.  If it holds, however, it would be in keeping with the general philosophy that motivic homotopy theory should be thought of as a flavor of parametrized homotopy theory: the pullback of a sheaf over a small space to a larger one will generally admit more automorphisms than the original sheaf itself.

In any case, there is an evident map to this automorphism group from the Morava stabilizer group, which therefore acts on the object $E^\mot \in \CAlg(\Sp^\mot)$ as well.  Moreover, this map should be an inclusion whenever the map $MU_* \ra MGL_*$ is (indeed, in certain cases the latter is even an isomorphism (see \cite{HoyoisMGL})).
\end{rem}

\begin{rem}
\label{remark compare with NSO on KGL}
in \cite{NSOAKT}, Naumann--Spitzweck--{\O}stv{\ae}r prove that the motivic algebraic K-theory spectrum $KGL$ (over a noetherian base scheme of finite Krull dimension) admits a unique $\bbE_\infty$ structure refining the canonical multiplication on its represented motivic cohomology theory.  Meanwhile, Goerss--Hopkins obstruction theory takes a commutative algebra in comodules and returns the moduli space of realizations.  These are not directly comparable: the former addresses the question of $\bbE_\infty$ structures on a \textit{given} object, while the latter addresses the question of the $\infty$-groupoid of objects that \textit{realize} some chosen algebraic datum.  Moreover, \cite{NSOAKT} addresses $KGL$ as an integral object, whereas \Cref{thm motivic morava E-theories} only applies to $\E^\mot_{k,\hat{\bbG}_m} \simeq KGL^\sm_p$.

To clarify, for a variable object $X \in \Sp^\mot_\cell$ we locate both the main theorem of \cite{NSOAKT} as well as \Cref{thm motivic morava E-theories} in the diagram
\[ \begin{tikzcd}
\hom_\Op(\Comm,\E\textup{nd}_{\Sp^\mot_\cell}(X)) \arrow{d} \\
\hom_\Op(\Comm,\E\textup{nd}_{\ho(\Sp^\mot_\cell)}(X))) \arrow{r}{E_{**}} & \CAlg(\Comod_{(E_{**},E_{**}E)})^\simeq \arrow{d}{\ms{M}(-)} \\
& \S_{/\leftloc_{E_{**}}(\CAlg(\Sp^\mot_\cell))}
\end{tikzcd} \]
(where $\E\textup{nd}$ denotes the endomorphism operad): the two downwards arrows are the settings for the respective theorems.
\begin{itemize}
\item On the one hand, taking $X = KGL$, there is a canonical point in the set $\hom_\Op(\Comm,\E\textup{nd}_{\ho(\Sp^\mot_\cell)}(KGL))$ which selects the standard multiplication on $KGL$ in $\ho(\Sp^\mot_\cell)$.  The main theorem of \cite{NSOAKT} can then be interpreted as saying that the fiber over this point is nonempty and contractible.
\item On the other hand, Goerss--Hopkins obstruction takes an \textit{algebraic} object in $\CAlg(\Comod_{(E_{**},E_{**}E)})^\simeq$ and provides a spectral sequence converging to the homotopy groups of its moduli space of realizations (which in our case collapses), considered as a subgroupoid of the $\infty$-category $\leftloc_{E_{**}}(\CAlg(\Sp^\mot_\cell))$.  The inclusion of this subgroupoid is the target of this algebraic object under the lower vertical map.
\end{itemize}

A toy example illustrating the difference between these two approaches is the difference between $\bbE_\infty$ structures on a fixed two-element set (of which there are four) and the moduli space of such objects in $\CAlg(\Set)$ (which consists of two discrete components).\footnote{However, this analogy fails in that the upper vertical map is already an equivalence since $\Set \xra{\sim} \ho(\Set)$.}  These two approaches are both explored in the more sophisticated setting of algebras over an operad in \cite{Rezkthesis}.

Note that the horizontal map in this diagram may not be injective: it is a priori possible that distinct multiplications on $X$ in $\ho(\Sp^\mot_\cell)$ might induce the same commutative algebra object structure on $E_{**}X \in \Comod_{(E_{**},E_{**}E)}$.  This represents a further obstruction to a direct comparison of these two approaches to the realization problem.
\end{rem}

\bibliographystyle{amsalpha}
\bibliography{Enmot}{}

\end{document}